\newtheorem{theorem}{Theorem}
\newtheorem{lemma}[theorem]{Lemma}
\newtheorem{proposition}[theorem]{Proposition}
\theoremstyle{remark}
\newtheorem*{remarks}{Remarks}
\numberwithin{theorem}{section}
\numberwithin{equation}{section}
\newcommand{\R}{\mathbb{R}}
\newcommand{\C}{\mathbb{C}}
\newcommand{\Q}{\mathbb{Q}}
\newcommand{\Z}{\mathbb{Z}}
\DeclareMathOperator{\ord}{ord}
\DeclareMathOperator{\proj}{proj}
\DeclareMathOperator{\textspan}{span}
\DeclareMathOperator{\GL}{GL}
\DeclareMathOperator{\tr}{tr}
\DeclareMathOperator{\cond}{cond}
\DeclareMathOperator{\Mat}{Mat}
\begin{document}
\title{Zeros of $L$-functions outside the critical strip}

\author{Andrew R.~Booker}
\address{School of Mathematics, University of Bristol,
University Walk, Bristol, BS8 1TW, United Kingdom}
\email{andrew.booker@bristol.ac.uk}
\thanks{A.~R.~B.\ was supported by EPSRC Grants EP/H005188/1,
EP/L001454/1 and EP/K034383/1.}
\thanks{F.~T.'s work was 
partially supported by the National Science Foundation under Grant No. DMS-1201330.}

\author{Frank Thorne}
\address{Department of Mathematics, University of South Carolina,
1523 Greene Street, Columbia, SC 29208, USA}
\email{thorne@math.sc.edu}

\begin{abstract}
For a wide class of Dirichlet series associated to automorphic forms,
we show that those without Euler products must have zeros
within the region of absolute convergence.  For instance, we prove that
if $f\in S_k(\Gamma_1(N))$ is a classical holomorphic modular form whose
$L$-function does not vanish for $\Re(s)>\frac{k+1}2$, then $f$ is a Hecke
eigenform.  Our proof adapts and extends work of Saias and Weingartner
\cite{SW}, who proved a similar result for degree $1$ $L$-functions.
\end{abstract}

\maketitle
\section{Introduction}
In \cite{SW}, Saias and Weingartner showed that if
$L(s)=\sum_{m=1}^\infty\frac{\lambda(m)}{m^s}$ is a Dirichlet series
with periodic coefficients, then either $L(s)=0$ for some $s$ with real
part $>1$, or $\lambda(m)$ is multiplicative at almost all primes (so
that $L(s)=D(s)L(s,\chi)$ for some primitive Dirichlet character $\chi$
and finite Dirichlet series $D$).  Earlier work of Davenport and Heilbronn
\cite{dh1,dh2} established this result for the special case of the Hurwitz
zeta-function $\zeta(s,\alpha)$ with rational parameter $\alpha$, and
proved an analogue for the degree $2$ Epstein zeta-functions. Also in
degree $2$, Conrey and Ghosh \cite{cg} showed that the $L$-function
associated to the square of Ramanujan's $\Delta$ modular form has
infinitely many zeros outside of its critical strip.  In this paper, we
generalize all of these results and study the extent to which, among all
Dirichlet series associated to automorphic forms (appropriately defined),
the existence of an Euler product is characterized by non-vanishing in
the region of absolute convergence.  For instance, for classical degree
$2$ $L$-functions, we prove the following:
\begin{theorem}\label{thm:classical}
Let $f\in S_k(\Gamma_1(N))$ be a holomorphic cuspform of arbitrary
weight and level. If the associated complete $L$-function
$\Lambda_f(s)=\int_0^\infty f(iy)y^{s-1}\,dy$ does not vanish for
$\Re(s)>\frac{k+1}2$ then $f$ is an eigenfunction of the Hecke operators
$T_p$ for all primes $p\nmid N$.
\end{theorem}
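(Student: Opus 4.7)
The plan is to prove the contrapositive: if $f\in S_k(\Gamma_1(N))$ is not an eigenfunction of $T_p$ for some prime $p\nmid N$, then $\Lambda_f$ (equivalently $L_f$, since $\Gamma$ does not vanish) has a zero in $\Re(s)>\frac{k+1}{2}$. Following the degree-$1$ approach of Saias and Weingartner, the idea is to exploit the Hecke recursion at the single prime $p$ to write $L_f$ as a short sum of rational multiples of ``partial'' $L$-functions, and then to use Kronecker equidistribution together with a Bohr--Jessen-type universality argument to force this sum to vanish at some admissible $s$.

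Since $T_p$ is self-adjoint for the Petersson product, decompose $f=\sum_\lambda f_\lambda$ into $T_p$-eigencomponents. For $p\nmid N$ the spectrum of $T_p$ on $S_k(\Gamma_1(N))$ coincides with $\{a_p(g)\}$ as $g$ runs over newforms of level dividing $N$ (because $T_p$ acts as $a_p(g)$ on each oldform subspace spanned by $\{g(dz):d\mid N/\mathrm{level}(g)\}$), so Deligne's bound yields $|\lambda|\le 2p^{(k-1)/2}$ for every eigenvalue; the hypothesis provides at least two nonzero summands. Applying the three-term Hecke recurrence to each $f_\lambda$ at powers of $p$ gives
$$
L_f(s)=\sum_\lambda\frac{L^{(p)}_{f_\lambda}(s)}{1-\lambda p^{-s}+p^{k-1-2s}},\qquad L^{(p)}_{f_\lambda}(s):=\sum_{(n,p)=1}\frac{a_n(f_\lambda)}{n^s}.
$$
By Ramanujan every denominator vanishes only on $\Re(s)=\frac{k-1}{2}$ and is bounded away from $0$ on $\Re(s)>\frac{k+1}{2}$, where the numerator series also converge absolutely.

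To produce a zero, fix $\sigma_0$ slightly above $\frac{k+1}{2}$ and consider $s=\sigma_0+it$. By Kronecker's theorem applied to the $\Q$-linearly independent set $\{\log q:q\text{ prime}\}$, the coordinate $p^{-it}$ is asymptotically independent of the tuple $(q^{-it})_{q\ne p}$ on which the series $L^{(p)}_{f_\lambda}(\sigma_0+it)$ depend. Thus one may prescribe $p^{-it}\approx u$ on the unit circle independently of the joint values $(L^{(p)}_{f_\lambda}(\sigma_0+it))_\lambda$, and for any such $u$ the vanishing condition reduces to a single complex-linear equation
$$
\sum_\lambda\frac{z_\lambda}{1-\lambda p^{-\sigma_0}u+p^{k-1-2\sigma_0}u^2}=0
$$
on the tuple $(z_\lambda)\in\C^r$. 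If some point in the support of the Bohr--Jessen-type joint distribution of $(L^{(p)}_{f_\lambda}(\sigma_0+it))_\lambda$ satisfies this equation for some $u\in S^1$, then along an approximating sequence $t_n\to\infty$ the shifted functions $L_f(s+it_n)$ converge (uniformly on compacta, after passing to a subsequence) to a nonzero limit vanishing at $\sigma_0$, and Hurwitz's theorem produces an actual zero of $L_f$ near $\sigma_0+it_n$ for large $n$.

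The main obstacle is the non-degeneracy required of that joint distribution: its support must meet the displayed hyperplane for some $u$ on the unit circle. Linear independence of the series $L^{(p)}_{f_\lambda}$ is the first ingredient, and it follows from the Hecke recursion, which forces any $T_p$-eigencomponent whose $p$-coprime Fourier coefficients all vanish to be identically zero. Beyond this, one needs the joint distribution to have positive-measure support in $\C^r$, so that it cannot be pushed off every hyperplane in the one-parameter family indexed by $u\in S^1$. Establishing this extension of Bohr--Jessen to a vector of linearly independent Dirichlet series that need not possess individual Euler products, and controlling it quantitatively enough to locate a zero strictly inside $\Re(s)>\frac{k+1}{2}$, is expected to be the technical core of the argument, paralleling and generalizing the main analytic lemma of \cite{SW}.
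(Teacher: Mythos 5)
Your decomposition is a genuinely different route from the paper's, and it stalls exactly where you say it does: the ``technical core'' you defer is not a routine extension of Bohr--Jessen, and nothing in your sketch supplies it. By splitting $f$ only into $T_p$-eigencomponents for the single prime $p$, you produce numerators $L^{(p)}_{f_\lambda}(s)=\sum_{(n,p)=1}a_n(f_\lambda)n^{-s}$ that are Dirichlet series of cusp forms which are, in general, not eigenforms of the other Hecke operators; they have no Euler products, and their coefficients carry no quasi-orthogonality structure. To conclude that the support of the joint distribution of $\bigl(L^{(p)}_{f_\lambda}(\sigma_0+it)\bigr)_\lambda$ meets the hyperplane $\sum_\lambda c_\lambda(u)z_\lambda=0$ for some $u\in S^1$, one needs a surjectivity (or at least large-support) statement for this vector of non-Eulerian Dirichlet series. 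Linear independence of the $L^{(p)}_{f_\lambda}$ is far too weak for this: a tuple of linearly independent Dirichlet series can perfectly well have joint value set avoiding a prescribed one-parameter family of hyperplanes. In other words, your reduction replaces the original problem (a linear combination of Dirichlet series without Euler products must vanish) by another instance of the same problem, one prime removed.

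The fix is to diagonalize with respect to the \emph{full} Hecke algebra rather than a single $T_p$: writing $f=\sum_i\sum_{d\mid N/N_i}c_{i,d}\,g_i(dz)$ over distinct newforms $g_i$ of level $N_i\mid N$ gives $L_f(s)=\sum_iD_i(s)L(s,g_i)$ with $D_i$ \emph{finite} Dirichlet series and $L(s,g_i)$ genuine cuspidal $\GL_2$ $L$-functions satisfying Ramanujan by Deligne. After the shift $s\mapsto s+\frac{k-1}2$ this is exactly the setting of Theorem~\ref{thm:main} with $P=\sum_iD_i(s)x_i$, and the half-plane $\Re(s)>\frac{k+1}2$ becomes $\Re(s)>1$; the monomial case $P=D(s)x_j$ corresponds to $f$ lying in the span of $\{g_j(dz)\}$ for a single newform $g_j$, hence being a $T_p$-eigenform for all $p\nmid N$. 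The surjectivity statement you are missing is then Proposition~\ref{prop:lem2}, whose proof uses precisely the two inputs your components lack: the Ramanujan bound and the Rankin--Selberg quasi-orthogonality $\sum_{p\le x}\lambda_i(p)\overline{\lambda_j(p)}/p$ of coefficients of \emph{distinct cuspidal representations} (Lemma~\ref{lem:sump}). Your observations that the Euler factor at $p$ is bounded away from zero on $\Re(s)\ge\frac{k+1}2$ and that $\Lambda_f$ and $L_f$ have the same zeros there are correct but peripheral; as written, the proposal does not constitute a proof.
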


Our method is sufficiently general to apply to $L$-functions of all
degrees, and in fact we obtain Theorem~\ref{thm:classical} as a corollary
of the following general result:
\begin{theorem}\label{thm:main}
Fix a positive integer $n$. For $j=1,\ldots,n$, let $r_j$ be
a positive integer and $\pi_j$ a unitary cuspidal automorphic
representation of $\GL_{r_j}(\mathbb{A}_\Q)$ with $L$-series
$L(s,\pi_j)=\sum_{m=1}^\infty\lambda_j(m)m^{-s}$.  Assume that the
$\pi_j$ satisfy the generalized Ramanujan conjecture at all finite places
(so that, in particular, $|\lambda_j(p)|\le r_j$ for all primes $p$)
and are pairwise non-isomorphic. Let
$$
R=\left\{\sum_{m=1}^M\frac{a_m}{m^s}:M\in\Z_{\ge0},
(a_1,\ldots,a_M)\in\C^M\right\}
$$
denote the ring of finite Dirichlet series, and let $P\in
R[x_1,\ldots,x_n]$ be a polynomial with coefficients in $R$.
Then either $P(L(s,\pi_1),\ldots,L(s,\pi_n))$ has a zero with real
part $>1$ or $P=D(s)x_1^{d_1}\cdots x_n^{d_n}$ for some $D\in R$,
$d_1,\ldots,d_n\in\Z_{\ge0}$.
\end{theorem}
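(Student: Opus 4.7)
My plan is to adapt the Bohr--Kronecker strategy of Saias--Weingartner to the joint setting, using Rankin--Selberg orthogonality to leverage the hypothesis that the $\pi_j$ are pairwise non-isomorphic. Assume $P$ is not a monomial of the stated form, so at least two terms $c_\mathbf{d}(s)x_1^{d_1}\cdots x_n^{d_n}$ in the expansion $P=\sum_\mathbf{d} c_\mathbf{d}(s)\prod_j x_j^{d_j}$ have nonzero coefficients in $R$. Setting $f(s)=P(L(s,\pi_1),\ldots,L(s,\pi_n))$, the goal is to produce $s$ with $\Re(s)>1$ and $f(s)=0$.

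For $\sigma>1$, I introduce a Bohr lift by replacing each factor $p^{-it}$ in the convergent Euler product by an independent torus variable $w_p\in S^1$. Under the Ramanujan bounds the products
\[
\widetilde{L}_j(\sigma,\mathbf{w})=\prod_p\prod_k(1-\alpha_{j,k}(p)p^{-\sigma}w_p)^{-1}
\]
converge absolutely, and each finite Dirichlet series $c_\mathbf{d}(s)=\sum_m a_m m^{-s}$ lifts to $\widetilde{c}_\mathbf{d}(\sigma,\mathbf{w})=\sum_m a_m m^{-\sigma}\prod_p w_p^{v_p(m)}$. Let $\widetilde{f}(\sigma,\mathbf{w})$ denote the resulting continuous function on $(1,\infty)\times\prod_p S^1$. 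By Kronecker's theorem the orbit $\{(p^{-it})_p:t\in\R\}$ is dense in $\prod_p S^1$, so the closure of $\{f(\sigma+it):t\in\R\}$ equals $\widetilde{f}(\{\sigma\}\times\prod_p S^1)$; it therefore suffices to find $(\sigma,\mathbf{w})$ where $\widetilde{f}$ vanishes.

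The first step is to show $\widetilde{f}\not\equiv 0$. Expanding the Euler factors in Taylor series in $w_p$ and comparing the resulting formal Dirichlet coefficients, a vanishing identity would force a nontrivial polynomial relation among the local Satake data $\{\alpha_{j,k}(p)\}_k$ at every prime, which is incompatible with the pairwise non-isomorphism. Quantitatively, Rankin--Selberg gives $\sum_{p\le x}\lambda_i(p)\overline{\lambda_j(p)}p^{-1}\to\infty$ precisely when $i=j$ and remains bounded otherwise, forcing the $\widetilde{L}_j$ to be multiplicatively independent over the ring of Bohr-lifted finite Dirichlet series.

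The main obstacle is to upgrade $\widetilde{f}\not\equiv 0$ into an actual zero. For this I would use a Bohr--Jessen style argument: as $\sigma\to 1^+$ the image of $\widetilde{L}_j(\sigma,\cdot)$ grows in diameter without bound, so for $\sigma$ close enough to $1$ one can adjust $\mathbf{w}$ to make the two dominant monomials of $\widetilde{f}$ equal in magnitude; a continuous homotopy in the remaining torus coordinates then crosses a zero by an intermediate-value argument in the phase of $\widetilde{f}$. Rankin--Selberg orthogonality is invoked a second time to ensure that this adjustment can be performed without sending the relevant $\widetilde{c}_\mathbf{d}$ to zero or accidentally aligning further monomials. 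Propagating the qualitative independence of the $\pi_j$ into quantitative joint control of the $\widetilde{L}_j$ is where I expect the technical heart of the proof to lie.
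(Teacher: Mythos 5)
Your overall frame---Bohr-lifting the Euler products to independent torus variables $w_p$, using Kronecker/almost-periodicity to transfer back to real $t$, and invoking Rankin--Selberg orthogonality as the substitute for the pairwise non-isomorphism hypothesis---matches the strategy of the paper. But the central step is missing, and the sketch you give in its place would not work. What is actually needed is a \emph{joint surjectivity} statement: for $\sigma$ close to $1$, the map $\mathbf{w}\mapsto\bigl(\widetilde L_1(\sigma,\mathbf{w}),\ldots,\widetilde L_n(\sigma,\mathbf{w})\bigr)$ (restricted to the primes $p>y$ beyond the support of the coefficients of $P$) covers every $n$-tuple $(z_1,\ldots,z_n)$ with $R^{-1}\le|z_j|\le R$. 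This is Proposition 3.1 of the paper, and it is proved by partitioning the primes into residue classes, using the quasi-orthogonality sums $\sum_{p\le x,\,p\equiv a\;(q)}\lambda_j(p)\overline{\lambda_k(p)}p^{-\sigma}$ to build nonsingular matrices lying in a fixed compact subset of $\GL_n(\C)$, and then running a Brouwer fixed-point argument to obtain a \emph{continuous} parametrization of a polydisc by the torus variables. Your replacement---``adjust $\mathbf{w}$ to make the two dominant monomials equal in magnitude; a continuous homotopy in the remaining torus coordinates then crosses a zero by an intermediate-value argument in the phase''---fails on two counts: a one-parameter intermediate-value argument cannot force a \emph{complex}-valued function to vanish (you need a two-real-dimensional degree/fixed-point/Rouch\'e argument), and cancelling two monomials says nothing about the remaining terms of $P$. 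The paper avoids the ``dominant monomials'' issue entirely by first applying the Nullstellensatz to show that any non-monomial $P$, specialized at $s=1+it_0$, has a zero $(y_1,\ldots,y_n)$ with every coordinate nonzero, and then using joint surjectivity to hit that zero exactly.

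Two smaller gaps. First, your claim that it ``suffices to find $(\sigma,\mathbf{w})$ where $\widetilde f$ vanishes'' only shows $\inf_t|f(\sigma+it)|=0$; converting this into an actual zero with $\Re(s)>1$ requires the Rouch\'e step the paper carries out on a small circle around $\sigma$ where the Bohr-lifted function is bounded away from zero. Second, your preliminary step of showing $\widetilde f\not\equiv0$ is not where the content lies (if $\widetilde f\equiv0$ you are done trivially, or $P$ is the zero polynomial and hence a monomial with $D=0$); the multiplicative independence of the $\widetilde L_j$ enters not as a non-vanishing statement but through the quantitative linear independence of the coefficient vectors $(\lambda_1(p),\ldots,\lambda_n(p))$ averaged over primes, which is exactly what Lemma 2.1 of the paper supplies. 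You correctly identified where the difficulty sits, but the argument you propose to fill it does not go through.
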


\begin{remarks}\hspace{1cm}
\begin{enumerate}
\item For $\pi_j$ as in the statement of the theorem, it is known
(see \cite{jacquet-shalika}) that $L(s,\pi_j)$ does not vanish for
$\Re(s)\ge1$. Thus if $P=D(s)x_1^{d_1}\cdots x_n^{d_n}$ is a monomial
then whether or not $P(L(s,\pi_1),\ldots,L(s,\pi_n))$ vanishes
for some $s$ with $\Re(s)>1$ is determined entirely by the finite
Dirichlet series $D(s)$.  Further, the Grand Riemann Hypothesis (GRH)
predicts that each $L(s,\pi_j)$ does not vanish for $\Re(s)>\frac12$.
Theorem~\ref{thm:main} demonstrates that the GRH, if it is true,
is a very rigid phenomenon.

\item By the almost-periodicity of Dirichlet series, if
$P(L(s,\pi_1),\ldots,L(s,\pi_n))$ has at least one zero with real part $>1$
then it must have infinitely many such zeros. In fact, our proof shows
that there is some number $\eta=\eta(P;\pi_1,\ldots,\pi_n)>0$ such that
for any $\sigma_1,\sigma_2$ with $1<\sigma_1<\sigma_2\le1+\eta$, we have
\begin{equation}\label{eqn:Nlowerbound}
\#\bigl\{s\in\C:\Re(s)\in[\sigma_1,\sigma_2], \Im(s)\in[-T,T],
P(L(s,\pi_1),\ldots,L(s,\pi_n))=0\bigr\}
\gg T
\end{equation}
for $T$ sufficiently large (where both the implied constant and the
meaning of ``sufficiently large'' depend on $\sigma_1,\sigma_2$
as well as $P$ and $\pi_1,\ldots,\pi_n$).

On the other hand, if we restrict to $\C$-linear combinations
(i.e.\ homogeneous degree $1$ polynomials $P\in\C[x_1,\ldots,x_n]$)
and $\pi_1,\ldots,\pi_n$ with a common conductor and archimedean
component $\pi_{1,\infty}\cong\ldots\cong\pi_{n,\infty}$, Bombieri and
Hejhal \cite{bh} showed, subject to GRH and a weak form of the pair
correlation conjecture for $L(s,\pi_j)$, that asymptotically $100\%$
of the non-trivial zeros of $P(L(s,\pi_1),\ldots,L(s,\pi_n))$ have real
part $\frac12$.

\item The assumption of the Ramanujan conjecture in Theorem~\ref{thm:main}
could be relaxed. For instance, it would suffice to have, for each
fixed $j$:
\begin{itemize}
\item[(i)]some mild control over the
coefficients of the logarithmic derivative
$$\frac{L'}{L}(s,\pi_j)=\sum_{m=1}^\infty c_j(m)m^{-s}$$
at prime powers, namely
$\sum_p\frac{|c_j(p^k)|^2}{p^k}<\infty$
for any fixed $k\ge2$ (cf.\ \cite[Hypothesis H]{rs});
\item[(ii)]an average bound for $|\lambda_j(p)|^4$ over arithmetic
progressions of primes, namely
$$
\limsup_{x\to\infty}
\frac{\sum_{\substack{p\le x\\p\equiv a\;(\text{mod }q)}}|\lambda_j(p)|^4}
{\sum_{\substack{p\le x\\p\equiv a\;(\text{mod }q)}}1}
\le C_j,
$$
for all co-prime $a,q\in\Z_{>0}$, where $C_j>0$ is independent of $a,q$.
\end{itemize}
Note that (i) is known to hold when $r_j\le 4$ (see \cite{rs,kim}).
Further, both estimates follow from the Rankin--Selberg method if,
for instance, the tensor square $\pi_j\otimes\pi_j$ is automorphic
for each $j$. Since this is known when $r_j=2$ (see \cite{gj}),
Theorem~\ref{thm:main} could be extended to include the $L$-functions
associated to Maass forms.

\item The main tool used in the proof is the quasi-orthogonality of
the coefficients $\lambda_j(p)$, i.e.\ asymptotic estimates for sums of
the form $\sum_{p\le x}\frac{\lambda_j(p)\overline{\lambda_k(p)}}p$
as $x\to\infty$. These follow from the Rankin--Selberg method, and
were obtained in a precise form independently by
Wu--Ye \cite[Thm.~3]{wu-ye} and Avdispahi\'c--Smajlovi\'c \cite[Thm.~2.2]{as}.
(We also make use of similar estimates for sums over $p$ in an arithmetic
progression---see Lemma~\ref{lem:sump} for the exact statement---though
it is likely that this could be avoided at the expense of making the
proof more complicated.)

Since quasi-orthogonality and the Ramanujan conjecture are essentially
the only properties of automorphic $L$-functions that we require,
one could instead take these as hypotheses and state the theorem for
an axiomatically-defined class of $L$-functions, such as the Selberg
class. However, it has been conjectured that the Selberg
class coincides with the class of automorphic $L$-functions, so this
likely offers no greater generality.

\item The conclusion of Theorem~\ref{thm:main} is interesting
even for $n=1$. For instance, Nakamura and Pa\'nkowski \cite {np}
have shown very recently, for a wide class of $L$-functions $L(s)$,
that if $P\in R[x]$ is not a monomial and $\delta>0$ then $P(L(s))$
necessarily has zeros in the half-plane $\Re(s)>1-\delta$. Our result
strengthens this to $\Re(s)>1$. (On the other hand,
\cite{np} also yields the estimate \eqref{eqn:Nlowerbound} for any
$[\sigma_1,\sigma_2]\subseteq(\frac12,1)$, which does not follow from
our method.)

\item Our results are related to universality results 
for zeta and $L$-functions. Voronin \cite{voronin} proved for any compact
set $K$ with connected complement contained within the strip $\Re(s) \in (\frac12, 1)$, and any nonvanishing, continuous 
function $f :  K \rightarrow \C$ holomorphic on the interior of $K$, that $f$ can be uniformly
approximated by vertical translates of the zeta function. 

Voronin's results were extended
by a number of authors. One result similar to ours, due to
Laurin\v{c}ikas and Matsumoto \cite{LM}, states that given $m$ functions $f_1, \dots, f_m$ as above, and
$L$-functions $L_j(s, F)$ associated to twists of a Hecke newform $F$ by pairwise inequivalent Dirichlet characters, that the $f_j$ may
be simultaneously approximated by a single vertical translate of the functions $L_j(s, F)$.
This implies \cite[Theorem 4]{LM} that non-trivial linear combinations of the $L_j(s, F)$ must
contain zeros inside the critical strip with $\Re(s) > \frac{1}{2}$.

References to many more works on universality can be found in \cite{LM}.

\end{enumerate}
\end{remarks}

\subsection*{Summary of the proof} Our proof closely follows Saias and Weingartner's in broad outline, but becomes more technical in
some places. The reader may wish to read \cite{SW} first.

The technical heart of our paper is Proposition \ref{prop:lem2}, an extension of Lemma 2 of \cite{SW}. 
Given $n$ complex numbers $z_1, \cdots, z_n$ (bounded away from $0$ and $\infty$), we would like to 
simultaneously solve the equations $L(s, \pi_j) = z_j$, leading to a quick proof of the main theorem.
As a substitute, we solve equations of a form $\prod_{p > y} L(\sigma + it_p, \pi_{j, p}) = z_j$, where the ordinate of $s$ is allowed to vary for each
prime.

Given this, in Section \ref{sec:main} we prove our 
main theorem, following the proof of Theorem 2 in \cite{SW}.
As in \cite{SW}, the main tools are Weyl's criterion, allowing us to simultaneously approximate all of the $p^{-\sigma - it_p}$ by
$p^{- \sigma - it}$ for a single $t$, and Rouch\'e's theorem, which states that actual zeros must exist near approximate zeros.

The proof of Proposition \ref{prop:lem2} follows those of Lemmas 1 and 2 of \cite{SW}. However, in \cite{SW} the Dirichlet
coefficients $\lambda(m)$ are all periodic to some fixed modulus,
and this fact, combined with the prime number theorem for arithmetic progressions, allows for easy control of various partial sums that need
to be estimated.
Here, we must do without this periodicity.

To prove Proposition \ref{prop:lem2}, we choose (in Proposition \ref{prop:sw1}) a partition of the set of primes $p > y$ into disjoint subsets $S$, and  complex numbers
$\epsilon_p \in S^1$ for each $p > y$, so that the vectors of partial sums 
$\sum_{p \in S} \epsilon_p \lambda_j(p) p^{-\sigma}$ are linearly independent in a precise quantitative
sense. Our main tool is the Rankin--Selberg method (substituting for
periodicity and orthogonality of Dirichlet characters);
see Lemma~\ref{lem:sump}. 

We also rely on the rather technical Proposition \ref{prop:td}, which says that
for matrices $g_1, \dots, g_m$, 
we can continuously solve equations of the form
$\sum_{i = 1}^m g_i f_i(z) = z$ for $n$-tuples of complex numbers
$z = (z_1, \cdots, z_n)$. The $g_i$ are constructed from the sums over $p \in S$ considered in Proposition 
\ref{prop:sw1}, but we are able to formulate Proposition \ref{prop:td} in a general manner, without reference to automorphic
forms or primes.

The conclusion of Proposition \ref{prop:td} is guaranteed only for large $m$, so that the number of subsets $S$ needed may be large. We choose these subsets to be arithmetic
progressions, for which the Rankin--Selberg 
estimates presented in Lemma \ref{lem:sump} are known to hold. If such estimates were unavailable, it seems likely that
we could still obtain our result by constructing the $S$ in a more {\itshape ad hoc} fashion
instead. In any case, and in contrast to Saias--Weingartner, the modulus of the arithmetic progression has no particular arithmetic significance,
and is chosen to be coprime to all the conductors of the $\pi_j$.

\subsection*{Acknowledgements}
This work was carried out during visits by both authors to the Research
Institute for Mathematical Sciences and Kyoto University. We thank these
institutions and our hosts, Professors Akio Tamagawa and Akihiko Yukie,
for their generous hospitality. We also thank the referee for helpful comments.

\section{Preliminaries}
\subsection{Automorphic $L$-functions}
Let $\pi_j$ be as in the statement of Theorem~\ref{thm:main}. Each
$\pi_j$ can be written as a restricted tensor product
$\pi_{j,\infty}\otimes\bigotimes_p\pi_{j,p}$ of local representations,
where $p$ runs through all prime numbers.  Then we have
\begin{equation}\label{eq:l_def_1}
L(s,\pi_j)=\prod_pL(s,\pi_{j,p}),\quad\text{for }\Re(s)>1.
\end{equation}
Here each local factor $L(s,\pi_{j,p})$ is a rational function of
$p^{-s}$, of the form
\begin{equation}\label{eq:l_def_2}
L(s,\pi_{j,p})=\frac1{(1-\alpha_{j,p,1}p^{-s})\cdots(1-\alpha_{j,p,r_j}p^{-s})}
\end{equation}
for certain complex numbers $\alpha_{j,p,\ell}$. The generalized Ramanujan
conjecture asserts that
$|\alpha_{j,p,\ell}|\le 1$, with equality holding for all
$p\nmid\cond(\pi_j)$, where $\cond(\pi_j)\in\Z_{>0}$ is the conductor of
$\pi_j$. In particular,
$|\lambda_j(p)|=|\alpha_{j,p,1}+\ldots+\alpha_{j,p,r_j}|\le r_j$.

\begin{lemma}\label{lem:sump}\footnote{[Added after publication.] As Mattia Righetti pointed out to us, this lemma is incorrect
as claimed, although the statement is true with $O(\sigma - 1)$ replaced by $O\big( \frac{1}{\log(2/(\sigma - 1))}\big)$. The final
line of the proof does not follow with the uniformity claimed, but in a published correction we prove that the bound
$\sum_{p > y} p^{-\sigma} \gg \frac{y^{1 - \sigma}}{\log y} \log \frac{2}{\sigma - 1}$ does.

The lemma is used in the proof of Proposition \ref{prop:sw1}, where it is needed only that the error tend to $0$
as $\sigma \rightarrow 1^+$. The corrected error term is sufficient, and the remaining results remain valid with only
cosmetic changes to the proofs.}
Let $a$ and $q$ be positive integers satisfying
$\bigl(q,a\prod_{j=1}^n\cond(\pi_j)\bigr)=1$.
Then
\[
\sum_{\substack{p>y\\p\equiv{a}\;(\text{\rm mod }q)}}
\frac{|u_1\lambda_1(p)+\ldots+u_n \lambda_n(p)|^2}{p^\sigma} = 
\bigg( \frac{1}{ \phi(q)} + O(\sigma-1)\bigg)
\sum_{p>y}p^{-\sigma}
\]
for all $y>0$, $\sigma\in(1,2]$ and all unit vectors
$(u_1, \ldots, u_n)$, where the implied constant
depends only on $\pi_1,\ldots,\pi_n$ and $q$.
\end{lemma}
\begin{proof}
Let $\chi\;(\text{mod }q)$ be a Dirichlet character, not necessarily primitive.
We consider the sum
\[
E_{jk\chi}(x)=
\sum_{p\le x}\big(\lambda_j(p)\overline{\lambda_k(p)}\chi(p)
-\delta_{jk\chi}\big)\frac{\log{p}}{p},
\]
running over primes $p\le x$,
where $\delta_{jk\chi}=1$ if $j=k$ and $\chi$ is the trivial
character, and $0$ otherwise.  Applying \cite[(2) and (3)]{as} with
$(\pi,\pi')=(\pi_j\otimes\chi,\pi_k)$ and, if $\chi$ is imprimitive,
subtracting any contribution from the terms with $p|q$,
we obtain the bound $E_{jk\chi}(x)\ll_q 1$.

Next, for any non-integral $y\ge\frac32$ and any $\sigma\in(1,2]$, we have
$$
\sum_{p>y}
\frac{\lambda_j(p)\overline{\lambda_k(p)}\chi(p)-\delta_{jk\chi}}{p^\sigma}
=\int_y^\infty\frac{t^{1-\sigma}}{\log{t}}\,dE_{jk\chi}(t).
$$
Integrating by parts and applying the above estimate for
$E_{jk\chi}$, we see that this is $\ll_q y^{1-\sigma}/\log{y}$.

Now, expanding the square and using orthogonality of Dirichlet
characters, we have
$$
\begin{aligned}
\sum_{\substack{p>y\\p\equiv{a}\;(\text{mod }q)}}
\frac{|u_1\lambda_1(p)+\ldots+u_n\lambda_n(p)|^2}{p^\sigma}&=
\frac1{\phi(q)}\sum_{j=1}^n\sum_{k=1}^n
\sum_{\chi\;(\text{mod }q)}u_j\overline{u_k}\overline{\chi(a)}
\sum_{p>y}\frac{\lambda_j(p)\overline{\lambda_k(p)}\chi(p)}{p^\sigma}\\
&=O_q\!\left(\frac{y^{1-\sigma}}{\log{y}}\right)
+\frac1{\phi(q)}\sum_{p>y}p^{-\sigma}.
\end{aligned}
$$
Finally, by the prime number theorem we have
$\sum_{p>y}p^{-\sigma}\gg\frac{y^{1-\sigma}}{(\sigma-1)\log{y}}$,
uniformly for $y\ge\frac32$ and $\sigma\in(1,2]$.
The lemma follows.
\end{proof}

\subsection{A few lemmas}
In the remainder of this section we discuss the topology of $\GL_n(\C)$
and prove some simple lemmas, to be used in the more technical
propositions which follow.

Let $\Mat_{n\times n}(\C)$ denote the set of $n\times n$ matrices with
entries in $\C$. For $A=(a_{ij})\in\Mat_{n\times n}(\C)$,
the {\itshape Frobenius norm} is defined by
\[
\|A\|=\sqrt{\tr\left(\overline{A}^T A\right)}=\sqrt{\sum|a_{ij}|^2}.
\]
Note that this agrees with the Euclidean norm under the identification
of $\Mat_{n\times n}(\C)$ with $\C^{n^2}$.
By the Schwarz inequality, we have
$|Av|\le\|A\|\cdot|v|$ for any $A\in\Mat_{n\times n}(\C)$ and $v\in\C^n$.

We endow $\GL_n(\C)=\{g\in\Mat_{n\times n}(\C):\det{g}\ne0\}$
with the subspace topology. In particular,
it is easy to see that a set $K\subseteq\GL_n(\C)$ is
compact if and only if $K$ is closed in $\Mat_{n\times n}(\C)$
and there are positive real numbers $c$ and $C$ such that
$$
\|g\|\le C\text{ and }|\det{g}|\ge c
\quad\text{for all }g\in K.
$$
Since $g^{-1}$ can be expressed in terms of $\frac1{\det{g}}$ and the
cofactor matrix of $g$, it follows that $\|g^{-1}\|$ is bounded on $K$
(and indeed the map $g\mapsto g^{-1}$ is continuous, so that
$\GL_n(\C)$ is a topological group with this topology).

\begin{lemma}
Suppose $K$ is a compact subset of $\GL_n(\C)$, $g\in K$,
and $U\subseteq\C^n$ contains an open $\delta$-neighborhood of some
point.  Then $gU$ contains an $\varepsilon$-neighborhood,
where $\varepsilon > 0$ depends only on $\delta$ and $K$.
\end{lemma}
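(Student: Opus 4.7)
The plan is to observe that $gU$ contains the $g$-image of the $\delta$-ball inside $U$, and to use the bound on $\|g^{-1}\|$ over the compact set $K$ (which is noted immediately before the lemma) to show that this image contains a ball of radius proportional to $\delta$.

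More precisely, first I would extract from the hypothesis a point $v\in\C^n$ with $B_\delta(v)\subseteq U$, where $B_\delta(v)=\{w\in\C^n:|w-v|<\delta\}$. Then, by the linearity of $g$,
$$
gU\supseteq gB_\delta(v)=gv+g\bigl(B_\delta(0)\bigr),
$$
so it suffices to prove that $g(B_\delta(0))$ contains a ball of radius $\varepsilon$ about the origin, with $\varepsilon$ depending only on $\delta$ and $K$.

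For this I would use the invertibility of $g$: given any $w\in\C^n$, we have $w\in g(B_\delta(0))$ if and only if $g^{-1}w\in B_\delta(0)$, i.e.\ $|g^{-1}w|<\delta$. By the Schwarz inequality noted in the paper,
$$
|g^{-1}w|\le\|g^{-1}\|\cdot|w|,
$$
so any $w$ with $|w|<\delta/\|g^{-1}\|$ lies in $g(B_\delta(0))$. Setting $M=\sup_{h\in K}\|h^{-1}\|$ and $\varepsilon=\delta/M$ then yields the claim.

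The only real point to verify is that $M<\infty$, i.e.\ that $\|h^{-1}\|$ is bounded uniformly on $K$. But this is precisely the observation made in the paragraph preceding the lemma: on a compact subset of $\GL_n(\C)$ the entries of $h^{-1}$ are continuous functions of $h$ (via cofactors and $1/\det h$, with $|\det h|$ bounded below), so $h\mapsto\|h^{-1}\|$ is continuous and hence bounded on $K$. There is no genuine obstacle here; the lemma is essentially a restatement of the fact that $\GL_n(\C)$ is a topological group combined with uniform control on a compact set.
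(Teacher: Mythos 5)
Your proposal is correct and is essentially identical to the paper's proof: both translate so that the $\delta$-ball is centered at the origin, bound $\|g^{-1}\|$ uniformly on $K$ by compactness, and use $|g^{-1}w|\le\|g^{-1}\|\cdot|w|$ to conclude that $\varepsilon=\delta/\sup_{h\in K}\|h^{-1}\|$ works.
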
\label{lem:nbhd_stability}
\begin{proof}
By linearity, we may assume without loss of generality that $U$ contains
the $\delta$-neighborhood of the origin, $N_\delta$. Since $K$ is
compact, there is a number $C>0$ such that $\|g^{-1}\|\le C$ for all
$g\in K$.  Put $\varepsilon=C^{-1}\delta$, and let $N_\varepsilon$ be the
$\varepsilon$-neighborhood of the origin. For any $v\in N_\varepsilon$
we have $|g^{-1}v|\le\|g^{-1}\|\cdot|v|<C\varepsilon=\delta$, so that
$v=g(g^{-1}v)\in gN_\delta$. Since $v$ was arbitrary, $gN_\delta\supseteq
N_\varepsilon$.
\end{proof}

\begin{lemma}\label{lem:sq_root_avg}
For any $v_0, \ldots, v_k \in \C^n$, there exist
$\theta_0, \ldots, \theta_k \in [0, 1]$ such that
\[
\left| \sum_{j = 0}^k e(\theta_k) v_j\right| \leq \sqrt{\sum_{j=0}^k|v_j|^2}.
\]
\end{lemma}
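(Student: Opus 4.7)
The plan is an averaging argument over random phases. I would let $\theta_0,\ldots,\theta_k$ be independent random variables, each uniform on $[0,1]$, and compute the mean of $\left|\sum_{j=0}^k e(\theta_j)v_j\right|^2$, where $\langle\cdot,\cdot\rangle$ denotes the standard Hermitian inner product on $\C^n$.

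Expanding the square gives
\[
\left|\sum_{j=0}^k e(\theta_j)v_j\right|^2
=\sum_{j=0}^k\sum_{l=0}^k e(\theta_j-\theta_l)\,\langle v_j,v_l\rangle.
\]
Integrating over $[0,1]^{k+1}$, applying Fubini, and using the orthogonality relation $\int_0^1 e(m\theta)\,d\theta=0$ for $m\in\Z\setminus\{0\}$, each off-diagonal term ($j\ne l$) contributes $0$, while the $j=l$ terms contribute $|v_j|^2$. Hence
\[
\int_{[0,1]^{k+1}}\left|\sum_{j=0}^k e(\theta_j)v_j\right|^2\,d\theta_0\cdots d\theta_k
=\sum_{j=0}^k|v_j|^2.
\]

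Since a non-negative integrable function on a probability space must take a value no larger than its mean at some point of its domain, there exist specific values $\theta_0,\ldots,\theta_k\in[0,1]$ for which $\left|\sum e(\theta_j)v_j\right|^2\le\sum|v_j|^2$; extracting square roots gives the claim. There is essentially no obstacle here: the proof amounts to Parseval's identity for the characters of $(\R/\Z)^{k+1}$, and only the bookkeeping in expanding the Hermitian square requires any care.
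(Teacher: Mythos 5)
Your proof is correct and is essentially the paper's own argument: both integrate $\bigl|\sum_{j=0}^k e(\theta_j)v_j\bigr|^2$ over independent uniform phases, use orthogonality of the exponentials to show the mean equals $\sum_{j=0}^k|v_j|^2$, and conclude that some choice of phases achieves at most the average. Your write-up is in fact slightly more careful than the paper's (which integrates over $[0,1]^k$ rather than $[0,1]^{k+1}$, an evident typo).
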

\begin{proof}
We have
\[
\int_{[0, 1]^k}\left|\sum_{j=0}^k e(\theta_j) v_j\right|^2
d\theta_1\cdots d\theta_k = \sum_{j=0}^{k}|v_j|^2.
\]
Thus, the average choice of $(\theta_0,\ldots,\theta_k)$ satisfies the
conclusion.
\end{proof}

\begin{lemma}\label{lem:monomial}
Let $P\in\C[x_1,\ldots,x_n]$. Suppose that every solution to the
equation $P(x_1,\ldots,x_n)=0$ satisfies $x_1\cdots x_n=0$.
Then $P$ is a monomial, i.e., $P=cx_1^{d_1}\ldots x_n^{d_n}$ for 
some $c\neq0$ and non-negative integers $d_1,\ldots,d_n$.
\end{lemma}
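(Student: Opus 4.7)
The plan is to factor out from $P$ the largest monomial in the variables dividing it and then show the remaining factor is a nonzero constant. Since $\C[x_1,\ldots,x_n]$ is a UFD and each $x_i$ is prime, and since $P\not\equiv0$ (otherwise $(1,\ldots,1)$ would be a zero violating the hypothesis), we may write $P=x_1^{d_1}\cdots x_n^{d_n}Q$ where $Q\in\C[x_1,\ldots,x_n]$ is not divisible by any $x_i$. Every zero of $Q$ is a zero of $P$, so the hypothesis translates to $V(Q)\subseteq V(x_1\cdots x_n)=\bigcup_i\{x_i=0\}$, and the problem reduces to showing that $Q$ is constant.

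To conclude, I would apply Hilbert's Nullstellensatz: the inclusion $V(Q)\subseteq V(x_1\cdots x_n)$ yields $x_1\cdots x_n\in\sqrt{(Q)}$, so $(x_1\cdots x_n)^N\in(Q)$ for some $N\ge1$. Thus $Q$ divides $(x_1\cdots x_n)^N$, and since the $x_i$ are pairwise non-associate primes in the UFD $\C[x_1,\ldots,x_n]$, unique factorization forces $Q=c\cdot x_1^{e_1}\cdots x_n^{e_n}$ with $c\in\C^\times$ and $e_i\ge0$. The condition $x_i\nmid Q$ then forces each $e_i=0$, so $Q=c$ and $P=c\cdot x_1^{d_1}\cdots x_n^{d_n}$ is a monomial.

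There is no serious obstacle here: the lemma is essentially a direct consequence of unique factorization combined with the Nullstellensatz. For a more self-contained argument one could instead proceed by induction on $n$, with the $n=1$ case immediate. In the inductive step, write $P=\sum_{k=0}^d P_k(x_1,\ldots,x_{n-1})\,x_n^k$ with $P_d\not\equiv0$, and observe that for any $a=(a_1,\ldots,a_{n-1})$ with $a_1\cdots a_{n-1}\ne0$ and $P_d(a)\ne0$, the univariate polynomial $P(a,x_n)$ has all of its zeros at $x_n=0$ and hence equals $P_d(a)\,x_n^d$. Since such $a$ form a Zariski-dense subset of $\C^{n-1}$, this forces $P_k\equiv0$ for $k<d$, reducing the problem to showing $P_d$ is a monomial in $n-1$ variables; one then checks that $P_d$ inherits the hypothesis (any zero of $P_d$ at a point with all coordinates nonzero would yield an entire line of zeros of $P$ violating the assumption) and invokes induction.
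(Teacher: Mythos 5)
Your proof is correct and follows essentially the same route as the paper: apply Hilbert's Nullstellensatz to get $(x_1\cdots x_n)^N\in(P)$ and then use unique factorization in $\C[x_1,\ldots,x_n]$ to conclude $P$ is a monomial. The preliminary step of factoring out $x_1^{d_1}\cdots x_n^{d_n}$ is harmless but unnecessary, and the alternative inductive argument you sketch is a valid (if longer) elementary substitute.
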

\begin{proof}
Let $V=\{(x_1,\ldots,x_n)\in\C^n:P(x_1,\ldots,x_n)=0\}$ be the vanishing
set of $P$. By hypothesis, the polynomial $x_1\cdots x_n$ vanishes on
$V$. Thus, since $\C$ is algebraically closed, Hilbert's Nullstellensatz
implies that there is some $d\in\Z_{\ge0}$ such that $(x_1\cdots x_n)^d$
is contained in the ideal generated by $P$, i.e.\ $P|(x_1\cdots x_n)^d$.
Since $\C[x_1,\ldots,x_n]$ is a unique factorization domain,
this is only possible if $P$ is a monomial.
\end{proof}

\begin{lemma}\label{lem:poly_cont}
Let $P\in\C[x_1,\ldots,x_n]$ and
suppose that $y\in\C^n$ is a zero of $P$. Then for any $\varepsilon>0$ there exists
$\delta>0$ such that any polynomial $Q\in\C[x_1,\ldots,x_n]$, obtained by changing any of the nonzero coefficients
of $P$ by at most $\delta$ each, has a zero $z\in\C^n$ with $|y-z|<\varepsilon$.
\end{lemma}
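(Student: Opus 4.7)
The plan is to reduce the multivariate perturbation problem to a single-variable one by restricting to a generic line through $y$ and then applying Rouch\'e's theorem. The case $P=0$ is trivial, so assume $P\ne0$ with $D:=\deg P\ge1$ (a nonzero constant would have no zeros, contradicting the hypothesis). The first task is to choose a direction $v\in\C^n$ for which the restricted polynomial $p(t):=P(y+tv)\in\C[t]$ is nontrivial in $t$. Writing $P_D$ for the degree-$D$ homogeneous part of $P$, the coefficient of $t^D$ in $p(t)$ is $P_D(v)$, which is nonzero for generic $v$ since $P_D$ is itself a nonzero form and $\C$ is infinite.

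Fix such a $v$. Then $p$ has degree exactly $D$ in $t$ with $p(0)=P(y)=0$, so $t=0$ is an isolated zero. Choose $r>0$ small enough that $r|v|<\varepsilon$ and $p$ has no zeros in the punctured closed disk $\{t\in\C:0<|t|\le r\}$, and set $\eta:=\min_{|t|=r}|p(t)|>0$. For any $Q$ obtained from $P$ by perturbing each nonzero coefficient by at most $\delta$, the difference $Q-P$ has support contained in that of $P$ and all coefficients of magnitude $\le\delta$. Expanding $(Q-P)(y+tv)$ monomial by monomial and bounding each $(y+tv)^\alpha$ uniformly on $|t|\le r$ shows that $|q(t)-p(t)|\le C\delta$ for all $|t|\le r$, where $q(t):=Q(y+tv)$ and $C=C(P,y,v,r)$ is a constant. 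Taking $\delta<\eta/C$ and applying Rouch\'e's theorem on the disk $|t|<r$ yields at least one zero $t_0$ of $q$ there. Then $z:=y+t_0v\in\C^n$ satisfies $Q(z)=0$ and $|z-y|=|t_0|\cdot|v|<r|v|<\varepsilon$.

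This is essentially the standard continuity-of-roots argument for one-variable polynomials, lifted to the multivariate setting by a linear slice, so there is no deep obstacle. The only point that requires some care is the selection of $v$: it must avoid the vanishing locus of $P_D$ in order that $p(t)$ have the full degree $D$ and that $0$ be an isolated zero. The hypothesis that only \emph{nonzero} coefficients are perturbed is a convenience in bounding $q(t)-p(t)$, as it keeps the support of $Q$ within that of $P$ and so makes the constant $C$ depend only on finitely many monomials in $P$.
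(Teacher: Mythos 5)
Your proof is correct and follows essentially the same route as the paper's: restrict $P$ and $Q$ to a line through $y$, bound the perturbation of the restricted one-variable polynomials, and apply Rouch\'e's theorem on a small circle. The only cosmetic difference is that you pick the direction by requiring the top homogeneous part to be nonzero at $v$ (guaranteeing full degree), whereas the paper only asks that the restriction not vanish identically; both suffice.
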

\begin{proof}
If $P$ is identically $0$ then so is $Q$, so we may take $z=y$.
Otherwise, set
$$
p(t)=P(y+tu)\quad\text{and}\quad q(t)=Q(y+tu)
$$
for $t\in\C$, where $u$ is any unit vector for which 
$p(t)$ does not vanish for all $t$; shrinking $\varepsilon$ if necessary, assume that
$p(t)$ does not vanish on $C_\varepsilon=\{t\in\C:|t|=\varepsilon\}$; and
let $\gamma>0$ be the minimum of $|p(t)|$ on $C_\varepsilon$. For
$t\in C_\varepsilon$ we have
\[
|q(t) - p(t)| < \delta N \Big(1+\varepsilon+|y|\Big)^{\deg P}
\]
where $N$ is the number of nonzero coefficients of $P$. 
Choosing $\delta$ so that the right side of this expression is bounded by $\gamma$, we have
$|q(t)-p(t)|<|p(t)|$ for $t\in C_\varepsilon$.
By Rouch\'e's theorem $q(t)$ has a zero $t_0$ of modulus
$|t_0|<\varepsilon$, and taking $z=y+t_0u$ completes the proof.
\end{proof}

\section{Simultaneous representations of $n$-tuples of complex numbers}

The technical heart of our work is the following analogue of Lemma 2 of \cite{SW}:

\begin{proposition}\label{prop:lem2}
For any real numbers $y,R>1$ there exists $\eta>0$ such that,
for all $\sigma\in(1,1+\eta]$, we have
\begin{align*}
\bigg\{\biggl(\prod_{p > y}
L(\sigma+it_p,\pi_{j,p})\biggr)_{j=1,\ldots,n}&\ :
t_p\in\R\text{ for each prime }p>y\bigg\}\\
&\supseteq
\Big\{ (z_1, \ldots, z_n) \in \C^n :
R^{-1} \leq |z_j| \leq R \textnormal{ for all } j \Big\}.
\end{align*}
\end{proposition}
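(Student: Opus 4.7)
The plan is to take the logarithm of the left-hand side and reduce to showing that a certain ``additive'' map covers a sufficiently large polybox in $\C^n$; exponentiating such a polybox will then yield the target polyannulus.

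Since $|\alpha_{j,p,\ell}|\le 1$, each local factor $L(\sigma+it,\pi_{j,p})$ is holomorphic and non-vanishing on $\Re s>0$, giving the absolutely convergent expansion
\[
\log L(\sigma+it_p,\pi_{j,p})=\frac{\lambda_j(p)}{p^\sigma}e(\phi_p)+\sum_{k\ge 2}\frac{1}{k}\sum_{\ell=1}^{r_j}\frac{\alpha_{j,p,\ell}^k}{p^{k\sigma}}e(k\phi_p),
\]
where $\phi_p:=-t_p\log p/(2\pi)\in[0,1)$ is a free parameter as $t_p$ runs over $\R$. Set $F_j(\phi):=\sum_{p>y}\log L(\sigma+it_p,\pi_{j,p})=G_j(\phi)+H_j(\phi)$, where $G_j$ collects the $k=1$ terms and $H_j$ the $k\ge 2$ tail. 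The Ramanujan bound gives the uniform estimate $|H_j(\phi)|\le\sum_{p>y}\sum_{k\ge 2}r_j/(kp^{k\sigma})\le M_y$ for all $\phi$ and $\sigma\in(1,2]$. Since the exponential map sends the polybox $B:=\{w\in\C^n:|\Re w_j|\le\log R,\ 0\le\Im w_j\le 2\pi\}$ onto the target polyannulus, it is enough to show that the range of $F$ contains $B$ for sufficiently small $\eta>0$.

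The heart of the argument is to show that, as $\sigma\to 1^+$, the range of the main term $G_j(\phi)=\sum_{p>y}\lambda_j(p)p^{-\sigma}e(\phi_p)$ covers balls in $\C^n$ of any prescribed radius. Geometrically this range is the Minkowski sum, over primes $p>y$, of the ``prime circles'' $\{\vec\lambda(p)p^{-\sigma}e(\phi_p):\phi_p\in[0,1)\}$, where $\vec\lambda(p)=(\lambda_1(p),\ldots,\lambda_n(p))$. Lemma~\ref{lem:sump} with $q=1$ yields the quasi-isotropy estimate
\[
\sum_{p>y}\frac{|u_1\lambda_1(p)+\cdots+u_n\lambda_n(p)|^2}{p^\sigma}\gg\sum_{p>y}p^{-\sigma}
\]
uniformly over unit vectors $u\in\C^n$, with the right-hand side diverging as $\sigma\to 1^+$. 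I plan to exploit this by greedily extracting disjoint ``blocks'' $S_1,\ldots,S_N\subseteq\{p>y\}$ of $2n$ primes each whose partial-sum map $(\phi_p)_{p\in S_i}\mapsto\sum_{p\in S_i}\vec\lambda(p)p^{-\sigma}e(\phi_p)$ has, at some base point, a real Jacobian lying in a fixed compact subset of $\GL_{2n}(\R)\simeq\GL(\R^{2n})$; Lemma~\ref{lem:nbhd_stability} (applied in $\R^{2n}\cong\C^n$) then produces, for each block, a ball of radius $\gg\min_{p\in S_i}p^{-\sigma}$ in the partial Minkowski sum. Minkowski-summing across the $N$ blocks, the overall radius is the sum of these, which diverges by quasi-isotropy as $\sigma\to 1^+$.

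Finally I pass from coverage-by-$G$ to coverage-by-$F=G+H$: since $|H|\le M_y\sqrt n$ is uniformly bounded, a topological degree argument on a sufficiently large finite truncation (restricting to primes $y<p\le P$), in the spirit of the Rouch\'e argument in Lemma~\ref{lem:poly_cont}, transfers the coverage of $G$ to coverage by $F$ of a slightly smaller ball still containing $B$; exponentiating then yields the stated polyannulus inclusion. The main obstacle is the block-extraction step: quasi-isotropy only provides well-conditioned basis-like configurations on average, so producing enough disjoint $2n$-blocks with Jacobians in a fixed compact subset of $\GL_{2n}(\R)$, and ensuring that $\sum_i\min_{p\in S_i}p^{-\sigma}\to\infty$ as $\sigma\to 1^+$, will require a careful greedy procedure, possibly supplemented by the phase-averaging of Lemma~\ref{lem:sq_root_avg} to control cross-block interference.
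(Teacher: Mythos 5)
Your overall architecture matches the paper's: expand $\log L(\sigma+it_p,\pi_{j,p})$ into a main term $\lambda_j(p)p^{-\sigma}e(\phi_p)$ plus an $O(p^{-2})$ tail that is uniformly bounded after summation, show the main term covers a large region as $\sigma\to1^+$, and then absorb the tail by a topological argument before exponentiating. However, the step you yourself flag as ``the main obstacle'' is in fact the entire technical content of the proposition, and as sketched it does not go through. Lemma~\ref{lem:sump} with $q=1$ is an \emph{averaged} statement over all primes $p>y$: it is perfectly consistent with $\vec\lambda(p)$ lying in a fixed hyperplane (or vanishing outright) for a positive proportion of primes, so a greedy extraction of finite blocks of $2n$ primes cannot guarantee Jacobians in a fixed compact subset of $\GL_{2n}(\R)$ — most blocks may be arbitrarily degenerate, and no selection of phases within a degenerate block repairs this. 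The paper avoids this by partitioning $S(y)$ into $mn$ \emph{infinite} sets (unions of residue classes mod a suitable $q$), so that Lemma~\ref{lem:sump} applies to each set separately — this is exactly why the lemma is stated for arithmetic progressions rather than just $q=1$ — and by choosing the phases $\epsilon_p$ inductively in a Gram--Schmidt fashion (Proposition~\ref{prop:sw1}) to force each resulting $n\times n$ matrix into the fixed compact set $K$ of \eqref{eqn:def_K}.

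A second gap: Minkowski-sum coverage gives surjectivity of the main term onto a large ball, but the absorption of the bounded tail $H$ requires more than surjectivity. The paper's argument applies the Brouwer fixed point theorem to $F_w(z)=w-E(z)$, which presupposes a \emph{continuous right inverse} $z\mapsto(t_p(z))$ of the main-term map; producing such a continuous section is the point of step (3) of Proposition~\ref{prop:td} and is not automatic from coverage (your blocks only give set-theoretic surjectivity). A degree argument on a finite truncation of the torus $(\phi_p)_{y<p\le P}$ runs into the mismatch of dimensions between domain and target; to make it work you would again need to restrict to a well-conditioned $2n$-real-dimensional family, which returns you to the unsolved block-extraction problem. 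So the proposal identifies the right reduction but leaves both the conditioning and the continuity issues — the two points where the paper invests its real effort (Propositions~\ref{prop:td} and~\ref{prop:sw1}) — unresolved.
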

Loosely speaking, after simultaneously approximating the $t_p$ by a common
$t$, it will follow that we can make the $L(s, \pi_j)$ independently
approach any desired $n$-tuple of nonzero complex numbers, and this will
allow us to find zeros in linear or polynomial combinations.

The proof relies on an analogue of Lemma 1 of \cite{SW}, whose adapation
is not especially straightforward. We carry out this work by proving
two technical propositions; the first establishes the existence of
solutions to a certain equation involving matrices in a fixed compact
subset of $\GL_n(\C)$.

\begin{proposition}\label{prop:td}
Let
\[
T = \{ (z_1, \ldots, z_n) \in \C^n \ : \ |z_1| = \ldots = |z_n| = 1 \},
\]
\[
D = \{ (z_1, \ldots, z_n) \in \C^n \ : \ |z_1|, \ldots, |z_n| \leq 1 \},
\]
and fix a compact set $K \subseteq \GL_n(\C)$.
Then there is a number $m_0 > 0$ such that for every $m \geq m_0$ and all
$(g_1, \ldots, g_m) \in K^m$, there are continuous functions
$f_1, \ldots, f_m : D \rightarrow T$ such that
$\sum_{i = 1}^m g_i f_i(z) = z$ for all $z \in D$.
\end{proposition}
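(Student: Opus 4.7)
The plan is to parametrize the torus and recast the problem as a Minkowski-sum-of-circles question, which I then attack in two stages: covering and continuous selection. Writing $w_i=(e^{i\theta_{i,1}},\ldots,e^{i\theta_{i,n}})\in T$ and letting $g_{i,j}\in\C^n$ denote the $j$-th column of $g_i$, the identity $\sum_i g_i f_i(z)=z$ is equivalent to $\sum_{i,j}e^{i\theta_{i,j}(z)}g_{i,j}=z$. So the task is to find continuous real-valued $\theta_{i,j}:D\to\R$ satisfying this; equivalently, to construct a continuous section over $D$ of the smooth map
\[
\Phi:T^{mn}\to\C^n,\qquad \Phi\bigl((e^{i\theta_{i,j}})_{i,j}\bigr)=\sum_{i,j}e^{i\theta_{i,j}}g_{i,j}.
\]

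For the first stage, I would show $\Phi(T^{mn})\supseteq D$ for $m$ sufficiently large, depending only on $K$. Since $K$ is compact in $\GL_n(\C)$, there is a uniform $C_K>0$ with $\|g\|,\|g^{-1}\|\le C_K$ for $g\in K$. In particular, the columns of each individual $g_i$ span $\C^n$ robustly, so the "single-matrix zonotope" $\{\sum_j a_j g_{i,j}:|a_j|\le 1\}$ contains the ball $B(0,1/C_K)$. Summing these zonotopes over $i=1,\ldots,m$ yields a convex set containing $B(0,m/C_K)$, which coincides with the convex hull of the Minkowski sum of the circles $C_{g_{i,j}}=\{e^{i\theta}g_{i,j}:\theta\in\R\}$. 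A Shapley--Folkman--Starr argument then bounds the Hausdorff distance between this convex hull and the actual Minkowski sum of circles by a constant depending only on $n$ and $C_K$, independent of $m$. Hence for $m\ge m_0(K)$, the Minkowski sum contains $D$ outright.

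The main obstacle is the continuous-selection stage. The derivative of $\Phi$ at a point has image equal to the real span of $\{e^{i\theta_{i,j}}g_{i,j}\}_{i,j}$, which equals $\C^n$ on a dense open subset of $T^{mn}$; a short dimension count shows the singular locus where $\Phi$ fails to be a submersion has positive codimension. At a regular preimage $\theta^{(0)}\in\Phi^{-1}(0)$---whose existence follows for $m$ large, since the fibers of $\Phi$ over $D$ have the excess real dimension $mn-2n$---the implicit function theorem yields a continuous local section near $0$. To extend this globally one uses that $D$ is star-shaped about $0$: lifting the straight-line homotopy $t\mapsto tz$ ($t\in[0,1]$) through $\Phi$ gives a continuous choice of preimages, provided the lift can be kept in the submersion locus, which the excess fiber dimension allows. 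Making this lifting rigorous uniformly over $D$ is the delicate part; I would handle it using the compactness of $K$ together with quantitative submersion estimates (in the spirit of the neighborhood-stability lemma proved earlier) to bound the sizes of the local sections from below, and then patch them via a finite open cover of $D$.
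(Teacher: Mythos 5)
Your reduction to the map $\Phi:T^{mn}\to\C^n$, $\Phi=\sum_{i,j}e^{i\theta_{i,j}}g_{i,j}$, is the right framing, but both stages have genuine gaps. In the covering stage, the Shapley--Folkman--Starr theorem gives you that every point of $\operatorname{conv}\bigl(\sum_{i,j}C_{g_{i,j}}\bigr)$ lies within a bounded distance $r(n,C_K)$ of the actual Minkowski sum; it does \emph{not} follow that the Minkowski sum contains a ball. A non-convex set can be $r$-dense in a huge ball while containing no open set at all (think of an $r$-net), so ``convex hull contains $B(0,m/C_K)$ plus small Hausdorff distance'' does not yield ``sum contains $D$ outright.'' You would need an additional argument exploiting the structure of the summands. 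The paper avoids this by a different fattening device: it first shows (via a pigeonhole on $K$ to find $g_i^{-1}g_j\approx I$, the explicit decomposition of any $z$ with $|z-1|\le\tfrac23$ as a sum of two unit complex numbers, and a Brouwer fixed-point correction) that each block of $m_1$ matrices produces a genuine open $\varepsilon$-ball $v_j+N_\varepsilon$ in the image, and then rotates the $k$ blocks by unimodular scalars so that the centers nearly cancel ($|\sum e(\theta_j)v_j|\le C\sqrt{k+1}$ by an averaging lemma) while the $\varepsilon$-balls add up to $kN_\varepsilon$.

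The more serious gap is the continuous-selection stage, which is the actual content of the proposition. Local sections obtained from the implicit function theorem at regular points of $\Phi$ do not ``patch via a finite open cover'': sections of a map are not like partitions of unity, and gluing them globally is an obstruction-theory problem, not a compactness one. Likewise, lifting the straight-line homotopy $t\mapsto tz$ continuously and simultaneously in $z$ requires $\Phi$ to restrict to a fibration over a neighborhood of $D$, and the critical values of $\Phi$ (a closed set that can meet $D$) obstruct this; ``the excess fiber dimension allows it'' is not an argument that a given path of targets avoids the critical values, nor that the lifts depend continuously on $z$. The paper sidesteps all of this with an explicit construction: it fixes, once and for all, representations $2e_j=\sum_{i\in I_{j,\ell}}g_it_i$ ($\ell=1,2,3$) of the scaled coordinate vectors using the covering step, and then writes $z_j=2[1+\alpha_j(z)+\beta_j(z)]$ with \emph{explicit continuous} unimodular functions $\alpha_j,\beta_j$ (the two-unit-vector decomposition $z=s_1(z)+s_2(z)$ on the disk $|z-1|\le\tfrac23$, made continuous by an orientation convention). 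Since $T$ is stable under multiplication by unimodular scalars, replacing $t_i$ by $t_i\alpha_j(z)$ or $t_i\beta_j(z)$ gives a manifestly continuous section. That explicit use of the multiplicative structure of $T$ is the idea your sketch is missing; without it, the selection stage does not go through.
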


We will carry out the proof in three steps:
\begin{enumerate}[(1)]
\item\label{it_small_neigh}
We first show that there exist $\varepsilon > 0$ and
$m_1$ such that
for all $m \geq m_1$ and all $(g_1, \ldots, g_m) \in K^m$, the set
$\{ \sum_{i = 1}^m g_i t_i : t_1,\ldots,t_m \in T \}$
contains an open $\varepsilon$-neighborhood of a point in $\C^n$.
\item\label{it_big_neigh}
`Fattening' the neighborhood constructed in the first step, we will 
show that there exists $m_2$ such that for $m \geq m_2$ and all
$(g_1,\ldots,g_m)\in K^m$,
$\{ \sum_{i = 1}^m g_i t_i : t_1,\ldots,t_m \in T \}$
contains the closed ball of radius $2$,
$\{(z_1,\ldots,z_n):|z_1|^2+\ldots+|z_n|^2\le4\}$.
\item\label{it_cont}
Although the previous step yields a parametrization of a large closed
set, it is not
obviously continuous. By repeating the construction from step
\eqref{it_small_neigh} using the added knowledge of step \eqref{it_big_neigh},
we show that one can achieve a continuous parametrization of $D$.
\end{enumerate}

\begin{proof}
We begin by showing \eqref{it_small_neigh}. By compactness, 
there is an $m_1$ such that for any $m\ge m_1$ and any $m$-tuple
$(g_1, \ldots, g_m)$, there is a distinct pair of indices $i,j$
such that $\| g_i^{-1} g_j - I \| < \frac{1}{3 \sqrt{n}}$.
Assume, without loss of generality, that $(i,j)=(1,2)$, and
put $\Delta = g_1^{-1} g_2 - I$. Then for any choice of $t_1, t_2 \in T$,
we have
\[
g_1 t_1 + g_2 t_2 = g_1 (t_1 + (I + \Delta) t_2),
\]
where $\| \Delta \|<\frac1{3\sqrt{n}}$.

We introduce some notation. First, define $A=\{z\in\C:|z-1|\le\frac13\}$
and $B=\{ z \in \C : |z - 1| \leq \frac{2}{3} \}$.
Next, let $s_1, s_2: B \rightarrow \C$ be the unique continuous
functions satisfying $z = s_1(z) + s_2(z)$,
$|s_1(z)| = |s_2(z)| = 1$ and $\Im(\frac{s_1(z)}{s_2(z)}) > 0$ for all
$z\in B$. For $j=1,2$, let $t_j:B^n\rightarrow T$ be defined by
$t_j(z_1,\ldots,z_n)=(s_j(z_1),\ldots,s_j(z_n))$.

Given an arbitrary element $w\in A^n$,
we define a continuous function $h_w: B^n \rightarrow \C^n$ by
$h_w(z) = w - \Delta{t_2(z)}$.
Since $|t_2(z)|=\sqrt{n}$ and
$\|\Delta\|<\frac1{3\sqrt{n}}$, we have
$|\Delta t_2(z)|<\frac13$.
In particular, each entry of $\Delta t_2(z)$ is bounded in magnitude
by $\frac13$, so by the triangle inequality, the
image of $h_w$ is contained in $B^n$. By the Brouwer fixed point theorem,
there exists $z \in B^n$ with $h_w(z) = z$, so that
\[
t_1(z) + (I + \Delta) t_2(z) = z+\Delta{t_2(z)} = z+w-h_w(z) = w.
\]
Therefore, all of $A^n$ is in the image of the map
$z \mapsto t_1(z) + (I + \Delta) t_2(z)$,
so that in particular
\[
A^n \subseteq \{ t_1 + g_1^{-1} g_2 t_2 \ : \ t_1, t_2 \in T \}.
\]

Applying Lemma \ref{lem:nbhd_stability} with $\delta=\frac13$,
there is an $\varepsilon>0$
depending only on $K$ such that
$\{ g_1 t_1 + g_2 t_2 \ : \ t_1, t_2 \in T \}$ 
contains an $\varepsilon$-neighborhood of some point in $\C^n$.
We conclude the same of the set
$\{ g_1 t_1 + \ldots + g_m t_m \ : \ t_1, \ldots, t_m \in T \}$
by choosing arbitrary fixed $t_3, \ldots, t_m \in T$.
\\
\\
Proceeding to step \eqref{it_big_neigh}, let $k_1$ be a large integer
to be determined later, set $m_2 = m_1 k_1$, and for any $m \geq m_2$ write
$m = k m_1 + l$ 
with $k\ge k_1$ and
$0 \leq l < m_1$.

For each $j$ with $0\leq j<k$, applying step \eqref{it_small_neigh}
to $(g_{jm_1+1},\ldots,g_{jm_1+m_1})$, we obtain an $\varepsilon$-neighborhood
centered at some $v_j\in \C^n$. Further, we put
$v_k = g_{km_1 + 1} \overrightarrow{1} + \ldots
+ g_{k m_1 + l} \overrightarrow{1}$, where
$\overrightarrow{1} = (1, \ldots, 1) \in T$.
Since $m_1$ is fixed and $K$ is compact, 
we have $|v_j| \leq C$ for $0\leq j\leq k$, for some $C$ independent of the individual $g_i$.

Let
$N_\varepsilon=\{(z_1,\ldots,z_n)\in\C^n:|z_1|^2+\ldots+|z_n|^2<\varepsilon^2\}$
be the $\varepsilon$-neighborhood of the origin in $\C^n$. Then by the
above observations, for any $\theta_0,\ldots,\theta_k\in[0,1]$,
$\big\{ \sum_{i = 1}^m g_i t_i \, : \, t_1,\ldots,t_m \in T \big\}$
contains the set
$$
\sum_{j=0}^{k-1}e( \theta_j)(v_j+N_{\varepsilon})+e(\theta_k)v_k
=\sum_{j=0}^ke(\theta_j)v_j+kN_{\varepsilon}.
$$
By Lemma~\ref{lem:sq_root_avg}, there is a choice of
$\theta_0,\ldots,\theta_k$ for which
$\bigl|\sum_{j=0}^ke(\theta_j)v_j\bigr|\le C\sqrt{k+1}$.
Now let $k_1$ be the smallest positive integer satisfying
$k_1\varepsilon > C \sqrt{k_1 + 1} + 2$. Then for $k\ge k_1$, we have
shown that $\{ \sum_{i = 1}^m g_i t_i : t_1,\ldots,t_m\in T \}$
contains the closed ball of radius $2$.
\\
\\
Proceeding to step \eqref{it_cont}, we put $m_0=3nm_2$. Suppose that 
$m\geq m_0$ and $(g_1,\dots,g_m)$ are given, and choose a partition
of $\{1,\ldots,m\}$ into $3n$
sets $I_{j,\ell}$ (for $1\leq j\leq n$, $1\leq \ell\leq3$),
each of size at least $m_2$.
For each $j$ with $1\leq j\leq n$, write
\[
v_j = v_{j,1} = v_{j,2} = v_{j,3} = (0,\ldots,0,2,0,\ldots,0),
\]
where the $2$ is in the $j$th position. For each $j$ and $\ell$ we use
step \eqref{it_big_neigh} to express $v_{j,\ell}$ in the form
\begin{equation}\label{eqn:step3_idecomp}
v_{j,\ell} = \sum_{i\in I_{j,\ell}}g_i t_i
\end{equation}
for some $t_i\in T$.

Next, note that the set
$\bigl\{2[(1,\ldots,1)+\alpha+\beta]\ : \alpha,\beta\in T\}$
contains $D$. As in the proof of step \eqref{it_small_neigh},
we can choose continuous functions
$\alpha=(\alpha_1,\ldots,\alpha_n), \beta=(\beta_1,\ldots,\beta_n):D\to T$
such that
$z_j=2[1+\alpha_j(z)+\beta_j(z)]$ for every $z=(z_1,\ldots,z_n)\in D$.
Thus,
$$
z=\sum_{j=1}^n[1+\alpha_j(z)+\beta_j(z)]v_j
=\sum_{j=1}^n [v_{j,1}+\alpha_j(z)v_{j,2}+\beta_j(z)v_{j,3}].
$$
Finally, we use \eqref{eqn:step3_idecomp} to rewrite this as
$$
z=\sum_{j=1}^n\left[
\sum_{i\in I_{j,1}}g_it_i 
+\sum_{i\in I_{j,2}}g_i\big(t_i\alpha_j(z)\big)
+\sum_{i\in I_{j,3}}g_i\big(t_i\beta_j(z)\big)\right],
$$
which is a decomposition of the type required.
\end{proof}

Next, we use the quasi-orthogonality of the coefficients $\lambda_j(p)$
(Lemma~\ref{lem:sump}) to show that, by choosing an arbitrary ``twist''
$\epsilon_p\in S^1$ for each large prime $p$, we can make sums of the
$\epsilon_p\lambda_j(p)$ line up in linearly independent directions,
as quantified in the following proposition.

Given a real parameter $y>0$, we write
\[
S(y) = \{p \text{ prime}: p > y \}
\quad\text{and}\quad
s(y,\sigma) = \sum_{p\in S(y)} p^{-\sigma}.
\]

\begin{proposition}\label{prop:sw1}
There is a compact set $K \subseteq \GL_n(\C)$, explicitly defined in \eqref{eqn:def_K}
depending only on the
degrees $r_1,\ldots,r_n$, with the following property:

Let $m$ be a positive integer. Then there is a real number
$\delta>0$ (depending on the $\pi_j$ and $m$) such that for any $y>0$
and any $\sigma\in(1,1+\delta]$,
there exists a partition of $S(y)$ into $mn$ pairwise disjoint subsets
$S_{ik}(y)$ ($i=1,\ldots,m$, $k=1,\ldots,n$)
and a choice of $\epsilon_p \in S^1$ for each $p\in S(y)$, such that the
$m$-tuple of matrices $(g_1,\ldots,g_m)$ defined by
\begin{equation}\label{eqn:gidef}
g_i = \bigg( \frac{mn}{s(y,\sigma)} \sum_{p \in S_{ik}(y)}
\frac{ \epsilon_p \lambda_j(p)}{p^{\sigma}} \bigg)_{1 \leq j, k \leq n},
\quad i=1,\ldots,m
\end{equation}
lies in $K^m$.
\end{proposition}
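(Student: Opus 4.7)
The strategy is to use the quasi-orthogonality from Lemma~\ref{lem:sump} together with the Ramanujan bound to force each $g_i$ to be close to a fixed invertible matrix whose entries depend only on the degrees $r_1,\ldots,r_n$. I would fix a prime $q$ coprime to $\prod_j\cond(\pi_j)$ with $q>mn$, choose $mn$ distinct reduced residues $a_{ik}\pmod{q}$ (one for each pair $(i,k)$), and for $y\ge q$ set $S_{ik}(y)=\{p>y:p\equiv a_{ik}\pmod{q}\}$, distributing the remaining reduced residue classes to a single fixed $S_{ik}$ to form a partition of $S(y)$. For each $p\in S_{ik}$ with $\lambda_k(p)\ne0$, take $\epsilon_p=\overline{\lambda_k(p)}/|\lambda_k(p)|$ (and $\epsilon_p=1$ if $\lambda_k(p)=0$), so that $\epsilon_p\lambda_k(p)=|\lambda_k(p)|\ge0$.

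With this choice the diagonal entry $g_i(k,k)=\frac{mn}{s(y,\sigma)}\sum_{p\in S_{ik}}|\lambda_k(p)|/p^\sigma$ is real and non-negative. Using $|\lambda_k(p)|\ge|\lambda_k(p)|^2/r_k$ together with Lemma~\ref{lem:sump} applied to the unit vector $u=e_k$ gives $g_i(k,k)\ge\frac{mn}{r_k\phi(q)}\bigl(1+o(1)\bigr)$, bounded below by a positive constant depending only on $r_k$. The trivial upper bound $|g_i(j,k)|\le r_j\cdot\frac{mn}{\phi(q)}\bigl(1+o(1)\bigr)$ follows from $|\epsilon_p\lambda_j(p)|\le r_j$ and standard estimates for primes in arithmetic progressions.

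The main obstacle is controlling the off-diagonal entries $g_i(j,k)$ with $j\ne k$ tightly enough to guarantee $|\det g_i|$ is bounded below by a positive constant. Polarizing Lemma~\ref{lem:sump} using unit vectors such as $(e_j\pm e_k)/\sqrt{2}$ and $(e_j\pm ie_k)/\sqrt{2}$ yields the Rankin--Selberg-type quasi-orthogonality $\sum_{p\in S_{ik}}\lambda_j(p)\overline{\lambda_k(p)}/p^\sigma=o(s(y,\sigma))$ as $y\to\infty$, $\sigma\to1^+$, but the factor $1/|\lambda_k(p)|$ in the definition of $\epsilon_p$ does not fit directly into this estimate. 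To circumvent this I would restrict $S_{ik}$ further to primes where $|\lambda_k(p)|$ is bounded away from $0$---a positive-density subset of the arithmetic progression by Ramanujan combined with the fact that $|\lambda_k(p)|^2$ has average $1$---so that $1/|\lambda_k(p)|$ is uniformly bounded on $S_{ik}$; Cauchy--Schwarz combined with Lemma~\ref{lem:sump} then forces $|g_i(j,k)|=o(1)$ for $j\ne k$.

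Finally, choosing $Y$ sufficiently large and $\delta>0$ sufficiently small, each $g_i$ lies in an arbitrarily small neighborhood of a fixed positive diagonal matrix with entries depending only on the $r_j$. Taking $K$ to be a compact neighborhood of this matrix inside $\GL_n(\C)$---for instance, the set of $g\in\Mat_{n\times n}(\C)$ satisfying $\|g\|\le R_1$ and $|\det g|\ge R_2$ for suitable positive $R_1,R_2$ depending only on $r_1,\ldots,r_n$---then furnishes the compact set \eqref{eqn:def_K}.
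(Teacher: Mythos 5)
There is a genuine gap at the step you yourself flag as the main obstacle: the off-diagonal entries. With $\epsilon_p=\overline{\lambda_k(p)}/|\lambda_k(p)|$ on $S_{ik}$, the entry $g_i(j,k)$ for $j\ne k$ involves the sum $\sum_{p}\lambda_j(p)\overline{\lambda_k(p)}\,|\lambda_k(p)|^{-1}p^{-\sigma}$, and this is \emph{not} a Rankin--Selberg sum: Lemma~\ref{lem:sump} (and its polarizations) only controls correlations of $\lambda_j(p)$ against $\overline{\lambda_k(p)}$ itself, not against the unimodular phase $\overline{\lambda_k(p)}/|\lambda_k(p)|$. Restricting to primes with $|\lambda_k(p)|\ge c$ does not help: Cauchy--Schwarz then gives
$\bigl|\sum\lambda_j\overline{\lambda_k}|\lambda_k|^{-1}p^{-\sigma}\bigr|\le\bigl(\sum|\lambda_j|^2p^{-\sigma}\bigr)^{1/2}\bigl(\sum p^{-\sigma}\bigr)^{1/2}$, which is of the same order as $s(y,\sigma)$, i.e.\ it yields $|g_i(j,k)|=O(1)$, not $o(1)$. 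There is no known quasi-orthogonality for the phase-normalized coefficients, so the ``close to a diagonal matrix'' strategy cannot be pushed through with the tools available. (A further unaddressed point: the primes you discard, where $|\lambda_k(p)|<c$, still form a positive-density subset of the progression on which $|\lambda_j(p)|$ for $j\ne k$ need not be small, so their contribution to the off-diagonal entries is again $\Theta(1)$ for an arbitrary choice of $\epsilon_p$ there. And since your cells $S_{ik}$ are single residue classes with the leftover classes dumped into one cell, the resulting bounds on $\|g_i\|$ and $|\det g_i|$ depend on $m$ and $q$, whereas the proposition requires $K$ to be fixed before $m$ is chosen, depending only on $r_1,\dots,r_n$.)

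The paper sidesteps all of this by not asking the $g_i$ to be near any particular matrix. It constructs the columns $v_1,\dots,v_n$ of $g_i$ inductively: having built $v_1,\dots,v_{k-1}$, it picks a unit vector $u$ orthogonal to them and chooses $\epsilon_p$ for $p\in S_{ik}$ to align the phases of the \emph{combined} linear form $\bar u_1\lambda_1(p)+\dots+\bar u_n\lambda_n(p)$. Then $\langle u,v_k\rangle=\frac{mn}{s(y,\sigma)}\sum_p|\bar u_1\lambda_1(p)+\dots+\bar u_n\lambda_n(p)|\,p^{-\sigma}$, which is bounded below by $\frac1r$ times the corresponding sum of $|\cdot|^2$ (using the Ramanujan bound $|\sum_j\bar u_j\lambda_j(p)|\le r$), and that quadratic quantity is exactly what Lemma~\ref{lem:sump} evaluates. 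This gives a uniform lower bound $\frac1{2r}$ on the distance from $v_k$ to the span of the earlier columns, hence $|\det g_i|\ge(2r)^{-n}$ by Gram--Schmidt, with constants depending only on the degrees. The key idea you are missing is to apply the phase-alignment trick to the full linear combination $\sum_j\bar u_j\lambda_j(p)$ rather than to a single $\lambda_k(p)$, so that the resulting lower bound reduces to a genuine Rankin--Selberg quantity.
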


\begin{proof}
Let $q$ be the smallest prime number satisfying $q\equiv1\pmod{mn}$ and
$q\nmid\prod_{j=1}^n\cond(\pi_j)$. We put $t=\frac{q-1}{mn}$ and
define $S_{ik}^\circ(y)$ to be the union of residue classes
$$
S_{ik}^\circ(y)=\bigcup_{\ell=1}^t
\bigl\{p\in S(y):p\equiv tn(i-1)+t(k-1)+\ell\pmod{q}\bigr\},
$$
and
$$
S_{ik}(y)=\begin{cases}
S_{ik}^\circ(y)\cup\{q\}&\text{if }
i=k=1\text{ and }y<q,\\
S_{ik}^\circ(y)&\text{otherwise}.
\end{cases}
$$
Then the $S_{ik}(y)$ are pairwise disjoint and cover $S(y)$.

For a fixed choice of $i$, let $v_k$ denote the $k$th column of $g_i$,
as defined in \eqref{eqn:gidef}, with the $\epsilon_p$ yet to be chosen.
We will show by induction that there is a choice of the $\epsilon_p$ such that
\begin{equation}\label{eqn:projspan}
|v_\ell - \proj_{\textspan\{v_1, \ldots, v_{\ell - 1}\}} v_\ell| \geq
\frac{1}{2r}
\end{equation}
holds for every $\ell=1,\ldots,n$, where $r=\sqrt{r_1^2+\ldots+r_n^2}$.
To that end,
let $k$ be given, and assume that \eqref{eqn:projspan} has been
established for $\ell=1,\ldots,k-1$.  Choose a unit vector
$u = (u_1, \ldots, u_n)$ orthogonal to $v_1, \ldots, v_{k - 1}$.
By the Schwarz inequality and the Ramanujan bound
$|\lambda_j(p)|\le r_j$, for each prime $p$ we have
$
|\bar{u}_1 \lambda_1(p) + \ldots + \bar{u}_n \lambda_n(p)| \leq
r.
$
Therefore
\begin{align}\label{eqn:inn_prod}
\frac{mn}{s(y, \sigma)}
\sum_{p\in S_{ik}(y)}
\frac{|\bar{u}_1\lambda_1(p)+\ldots+\bar{u}_n\lambda_n(p)|}{p^\sigma}
&\geq\frac{mn}{r s(y, \sigma)} \sum_{p\in S_{ik}^\circ(y)}
\frac{|\bar{u}_1\lambda_1(p)+\ldots+\bar{u}_n\lambda_n(p)|^2}{p^\sigma}\\
&=\frac{1+O_{m, n}(\sigma-1)}{r}, \nonumber
\end{align}
the latter equality following by Lemma \ref{lem:sump}.
We choose $\delta$ so that the $O$ term
above is bounded in modulus by $\frac12$, and
for each $p \in S_{ik}(y)$ we choose $\epsilon_p$ such that
$\epsilon_p (\bar{u}_1 \lambda_1(p) + \ldots + \bar{u}_n \lambda_n(p))$
is real and nonnegative. Then 
the left side of \eqref{eqn:inn_prod} equals
\[
\langle u, v_k \rangle = 
\langle u, v_k - \proj_{\textspan\{v_1, \ldots, v_{k - 1}\}} v_k \rangle \leq
|v_k - \proj_{\textspan\{v_1, \ldots, v_{k - 1}\}} v_k|,
\]
so that \eqref{eqn:projspan} follows for $\ell=k$.

Applying Gram--Schmidt orthogonalization to $v_1,\ldots,v_n$, it follows
from \eqref{eqn:projspan} that
$|\det{g_i}|\ge (2r)^{-n}$. Moreover, by the Schwarz inequality
and Lemma \ref{lem:sump} again, each entry of $g_i$ is bounded above by
$1+O_{m, n}(\sigma-1)$,
so that $\|g_i\|\le2n$ for a suitable choice of $\delta$. Thus,
\begin{equation}\label{eqn:def_K}
K=\{g\in\GL_n(\C):\|g\|\le 2n, |\det{g}|\ge(2r)^{-n}\}
\end{equation}
has the desired properties.
\end{proof}

We are now ready to prove Proposition \ref{prop:lem2}, largely following \cite{SW}.

\begin{proof}[Proof of Proposition \ref{prop:lem2}]
We use Propositions \ref{prop:sw1} and \ref{prop:td} to 
determine a compact set $K\subseteq\GL_n(\C)$, a positive integer
$m_0$, and a real number $\delta>0$ with the properties described
there. 
Taking $m=m_0$,
the aforementioned propositions yield, for any
$\sigma\in(1,1+\delta]$, an $m$-tuple of matrices
$(g_1,\ldots,g_m)\in K^m$, elements $\epsilon_p \in S^1$ for each prime $p > y$, and
continuous functions $f_1,\dots,f_m:D\rightarrow T$
such that
\begin{equation}\label{eqn:id_decomp}
\sum_{i=1}^m g_i f_i(z) = z\quad\text{for all }z\in D.
\end{equation}

Now, let $\mu=\frac{s(y,\sigma)}{mn}$.
For each prime $p>y$, we define
a continuous function $t_p : \mu D\rightarrow\R$ satisfying
\begin{equation}\label{eqn:def_tp}
p^{- i t_p(z)} = \epsilon_p f_i(\mu^{-1}z)_k,
\end{equation}
where $(i,k)$ is the unique pair of indices for which $p\in S_{ik}(y)$ and
$f_i(\mu^{-1}z)_k$ denotes the $k$th component of
$f_i(\mu^{-1}z)$.
(Note that the lift from $S^1$ to $\R$ is
possible since $D$ is simply connected.)

Define an error term $E(z)=(E_1(z),\ldots,E_n(z))$ by writing,
for each $j=1,\ldots,n$, 
$$
E_j(z)=\sum_{p>y}\left(\log L(\sigma+it_p(z),\pi_{j,p}) 
-\lambda_j(p)p^{-(\sigma+it_p(z))}\right).
$$
By the Ramanujan bound, we have
$$
\log L(s,\pi_{j,p})-\lambda_j(p)p^{-s}=O(p^{-2})
$$
uniformly for $\Re(s)\ge1$. Since $\sum_p p^{-2}$ converges,
the continuity of $E$ follows from that of the individual $t_p$.
Moreover, each component $E_j(z)$ is bounded by a number $C>0$, independent
of $j$, $z$, $y$, or $\sigma$. 

Set $R'=\sqrt{\pi^2+\log^2{R}}$.  We take $\eta\in(0,\delta]$
small enough that the condition $\sigma\in(1,1+\eta]$ ensures that
$\mu\ge C+R'$.
By \eqref{eqn:id_decomp}, \eqref{eqn:def_tp}, and
Proposition \ref{prop:sw1} we have
$$
\sum_{p>y}\lambda_j(p)p^{-(\sigma+it_p(z))}=
\sum_{i=1}^m\sum_{k=1}^n
\sum_{p\in S_{ik}(y)} 
\frac{\lambda_j(p)\epsilon_pf_i(\mu^{-1}z)_k}{p^{\sigma}}=z_j,
$$
for any $z=(z_1,\ldots,z_n)\in\mu{D}$.
Now fix $w\in R'D$ and define a function
$F_w:(C+R')D\to\C$ by $F_w(z)=w-E(z)$.
By the estimate for $E_j(z)$ above, the image of $F_w$ is contained
in $(C+R')D$. Thus,
by the Brouwer fixed point theorem, there exists $z\in(C+R')D$
with $F_w(z)=z$, so that
\[
\biggl(\sum_{p>y}\log L(\sigma+it_p(z),\pi_{j,p})\biggr)_{j=1,\ldots,n}
=z+E(z)=z+w-F_w(z)=w.
\]
Taking exponentials yields the proposition.
\end{proof}

\section{Proof of Theorem \ref{thm:main}}\label{sec:main}
The proof will be carried out in two steps:
\begin{enumerate}[(1)]
\item
Applying our previous results, we
show that unless $P$ is a monomial (as described in Theorem \ref{thm:main}), 
for every $\sigma > 1$ sufficiently close to $1$
there are real numbers $t_p$ (for each prime $p$) and $t_0$ 
such that $P|_{s=\sigma+it_0}$
vanishes at $\bigl(\prod_pL(\sigma+it_p,\pi_{1,p}),\ldots,
\prod_pL(\sigma+it_p,\pi_{n,p})\bigr)$.
\item Simultaneously approximating the $p^{-it_p}$ by $p^{-it}$ for a
common value of $t$, we use Rouch\'e's theorem to find a zero of
$P(L(s,\pi_1),\ldots,L(s,\pi_n))$ close to $\sigma+it$.
\end{enumerate}
Note that the second step is standard and is applied in \cite{SW}
in much the same way.
\\
\\
We begin with a polynomial $P$ whose coefficients are 
finite Dirichlet series $D(s)=\sum_{m=1}^M a_mm^{-s}$, and let
$y$ be the largest value of $M$ occurring in any of these coefficients.
We rewrite each $L(s,\pi_j)$ as
$L_{\leq{y}}(s,\pi_j)L_{>y}(s,\pi_j)$,
splitting each Euler product into products over primes $p\leq y$
and $p>y$ respectively. Setting
$$
Q(x_1,\ldots,x_n)
=P\bigl(L_{\le{y}}(s,\pi_1)x_1,\ldots,L_{\le{y}}(s,\pi_n)x_n\bigr),
$$
we have
$P(L(s,\pi_1),\dots,L(s,\pi_n))=Q(L_{>y}(s,\pi_1),\dots,L_{>y}(s,\pi_n))$.

The coefficients of $Q$ are rational functions of
the $p^{-s}$ for $p\le y$.  More precisely, for any monomial term
$D(s)x_1^{d_1}\cdots x_n^{d_n}$ in the expansion of $P$, the corresponding
term of $Q$ is 
\[D(s)L_{\le{y}}(s,\pi_1)^{d_1}\cdots{}L_{\le{y}}(s,\pi_n)^{d_n}
x_1^{d_1}\cdots{}x_n^{d_n}.
\]  Since the finite
Euler products $L_{\le{y}}(s,\pi_j)$ are non-vanishing holomorphic functions
on $\{s\in\C:\Re(s)\ge1\}$, the corresponding terms of $P$ and $Q$
have the same zeros there.

Let $D_1(s),\ldots,D_m(s)$ run through the coefficients of $P$ which do
not vanish identically, and consider their product
$f(s)=D_1(s)\cdots D_m(s)$. Then $f$ is itself a finite Dirichlet series
which does not vanish identically. By complex analysis, $f$ cannot
vanish at $1+it$ for every $t\in\R$, so there is some $t_0$
for which $D_1(1+it_0),\ldots,D_m(1+it_0)$ are all non-zero, and the same holds for the corresponding terms of $Q$.

Next we 
specialize the coefficients of $Q$ to a fixed value of $s$, obtaining
a polynomial $h_s\in\C[x_1,\dots,x_n]$. Considering $s=1+it_0$, Lemma \ref{lem:monomial}
implies that either $h_{1+it_0}=cx_1^{d_1}\cdots x_n^{d_n}$
for some $c\in\C$
and $d_1,\ldots,d_n\in\Z_{\ge0}$, or that there are $y_1,\ldots,y_n\in\C$,
none zero, for which $h_{1+it_0}(y_1,\ldots,y_n)=0$.
In the former case, it follows from our choice of $t_0$ that
$P=D(s)x_1^{d_1}\cdots x_n^{d_n}$ is a monomial, as allowed in the
conclusion of Theorem~\ref{thm:main}.
Henceforth we assume that we are in the latter case,
and aim to show that $P(L(s,\pi_1),\ldots,L(s,\pi_n))$
has a zero with $\Re(s)>1$.

We choose $R>1$ so that $R^{-{1/2}}\leq|y_j|\leq R^{1/2}$ for
every $j$.  By Lemma \ref{lem:poly_cont}, there is a number
$\varepsilon>0$ such that for every $\sigma\in(1,1+\varepsilon]$,
there exists $(z_1(\sigma),\ldots,z_n(\sigma))\in\C^n$
satisfying $h_{\sigma+it_0}(z_1(\sigma),\ldots,z_n(\sigma))=0$
and $R^{-1}\leq|z_j(\sigma)|\leq R$ for every $j$.  We use
Proposition \ref{prop:lem2} to determine $\eta$ in terms of
$y$ and $R$, and assume that $\eta \le \varepsilon$
by shrinking $\eta$ if necessary.
Proposition~\ref{prop:lem2} then guarantees that, for every
$\sigma\in(1,1+\eta]$, we can solve the
simultaneous system of equations
$$
\prod_{p>y}L(\sigma+it_p,\pi_{j,p})=z_j(\sigma),\quad j=1,\ldots,n,
$$
in the $t_p$ for $p>y$. For $p\le y$ we set $t_p=t_0$, thereby
completing step (1).
\\
\\
Turning to step (2), let $\sigma_1,\sigma_2\in\R$ with
$1<\sigma_1<\sigma_2\le1+\eta$, and put $\sigma=\frac{\sigma_1+\sigma_2}2$.
With the $t_0$ and $t_p$ resulting from step (1) for this choice of $\sigma$,
let $P_{it_0}$ denote the polynomial obtained from $P$ by replacing $s$
by $s+it_0$ in all of its coefficients, and define
\begin{equation}\label{eqn:Fdef}
F(s)=P_{it_0}\!\left(
\prod_pL(s+it_p,\pi_{1,p}),\ldots,\prod_pL(s+it_p,\pi_{n,p})\right).
\end{equation}
Then $F$ is holomorphic for $|s-\sigma|<\sigma-1$ and satisfies
$F(\sigma)=0$ by construction. It follows that there
is a number $\rho\in(0,\frac{\sigma_2-\sigma_1}2]$ such that
$F(s)\neq0$ for all $s\in C_\rho=\{s\in\C:|s-\sigma|=\rho\}$.
Write $\gamma$ for the minimum of $|F(s)|$ on $C_\rho$.

Next, by abuse of notation, we write $P(s)$ as shorthand for
$P(L(s,\pi_1),\ldots,L(s,\pi_n))$. As
$P(s)=\sum_{m=1}^\infty a_mm^{-s}$ converges absolutely as a Dirichlet
series for $\Re(s) > 1$, there is an integer
$M > 0$ with
$\sum_{m=M}^{\infty}|a_m|m^{-\sigma_1}\le\frac{\gamma}3$.
By \eqref{eqn:Fdef} we have
$F(s)=\sum_{m=1}^\infty b_mm^{-s}$, where
$b_m=a_m\prod_{p|m}p^{-it_p\ord_p(m)}$, and
by the joint uniform distribution of $p^{it}$ for primes $p<M$, it
follows that the set of $t\in\R$ satisfying
\[\sum_{m=1}^{M-1}\frac{|a_mm^{-it}-b_m|}{m^{\sigma_1}}<\frac{\gamma}3\]
has positive lower density.  For any such $t$ the triangle
inequality yields $|P(s+it)-F(s)|<\gamma$ for all $s$ with
$\Re(s)\ge\sigma_1$, and in particular for all $s\in C_\rho$.
By Rouch\'e's theorem, it follows that $P(s+it)$
has a zero $s$ with $|s-\sigma|<\rho$.
Thus, $P(s)$ has zeros with real part in $[\sigma_1,\sigma_2]$, and indeed we have
$$
\#\{s\in\C:\Re(s)\in[\sigma_1,\sigma_2],
\Im(s)\in[-T,T], P(s)=0\}
\gg_{\sigma_1,\sigma_2}T
$$
for all $T\ge T_0(\sigma_1,\sigma_2)$.

\bibliographystyle{amsplain}
\providecommand{\bysame}{\leavevmode\hbox to3em{\hrulefill}\thinspace}
\providecommand{\MR}{\relax\ifhmode\unskip\space\fi MR }
\providecommand{\MRhref}[2]{%
  \href{http://www.ams.org/mathscinet-getitem?mr=#1}{#2}
}
\providecommand{\href}[2]{#2}

\end{document}